\setlist{leftmargin=4mm}
\newcommand{\Z}{\mathbb{Z}}
\newcommand{\R}{\mathbb{R}}
\renewcommand{\sf}{\mathsf{sf}}
\newcommand{\APS}{\mathsf{APS}}
\newcommand{\mAPS}{\mathsf{mAPS}}
\newcommand{\UN}{\mathsf{U}(N)}
\newcommand{\deR}{\mathrm{dRh}}
\newcommand{\N}{\mathbb{N}}
\newcommand{\C}{\mathbb{C}}
\newcommand{\D}{\mathscr{D}}
\newcommand{\vol}{\mathsf{vol}}
\newcommand{\scal}{\mathrm{scal}}
\newcommand{\op}{\mathrm{op}}
\newcommand{\nbR}{\nabla^{\R^{n+1}}}
\newcommand{\nbS}{\nabla^{S}}
\DeclareMathOperator{\ind}{\mathrm{ind}}
\DeclareMathOperator{\id}{\mathrm{id}}
\DeclareMathOperator{\cs}{\mathsf{ch}^{\mathrm{odd}}}
\DeclareMathOperator{\ch}{\mathsf{ch}}
\DeclareMathOperator{\tr}{\mathsf{tr}} 
\DeclareMathOperator{\Ahat}{\mathsf{\hat A}}
\DeclareMathOperator{\sign}{\mathsf{sign}}
\DeclareMathOperator{\rk}{\mathsf{rk}}
\newtheorem*{theorem}{Theorem}
\newtheorem{lemma}{Lemma}
\newtheorem{corollary}{Corollary} 
\theoremstyle{definition}
\newtheorem*{remark}{Remark}
\newtheorem*{example}{Example} 
\newtheorem{numberedexample}{Example} 
\begin{document}

\title{Spectral flow and the Atiyah-Patodi-Singer index theorem} 
\author{Christian B\"ar}
\address{Universit\"at Potsdam, Institut f\"ur Mathematik, 14476 Potsdam, Germany}
\email{\href{mailto:christian.baer@uni-potsdam.de}{christian.baer@uni-potsdam.de}}
\urladdr{\url{https://www.math.uni-potsdam.de/baer/}}
\author{Remo Ziemke}
\email{\href{mailto:remo.ziemke@uni-potsdam.de}{remo.ziemke@uni-potsdam.de}}
\urladdr{\url{https://www.math.uni-potsdam.de/professuren/geometrie/personen/remo-ziemke/}}

\begin{abstract} 
We establish a formula for the spectral flow of a smooth family of twisted Dirac operators on a closed odd-dimensional Riemannian spin manifold, generalizing a result by Getzler.
The spectral flow is expressed in terms of the $\Ahat$-form of the manifold, the odd Chern character form of the family of connections, and the $\xi$-invariants of the initial and final operators.
Our proof is based on a reduction to the Atiyah-Patodi-Singer index theorem for manifolds with boundary, which provides a conceptually very simple approach to the problem.
As an application, we give a proof of Llarull's rigidity theorem for scalar curvature of strictly convex hypersurfaces in Euclidean space which works the same in even and odd dimensions.
\end{abstract}

\keywords{Dirac operator, spectral flow, Atiyah-Patodi-Singer index theorem, odd Chern character, $\eta$-invariant, $\xi$-invariant, Llarull's theorem}

\subjclass[2020]{53C27, 58J20, 58J28, 58J30}

\date{\today}

\maketitle

\section{Introduction} 

Let $M$ be a closed Riemannian spin manifold.
Then the Dirac operator on $M$ is defined and its Fredholm index can be computed in geometric terms using the Atiyah-Singer index theorem.
This has seen numerous applications in geometry and topology.
For example, in scalar curvature geometry one can rule out the existence of metrics with positive scalar curvature in a lot of cases and one can prove Llarull's rigidity result for the round sphere \cites{Ll} in even dimensions.
In these applications, the index theorem is typically used to show existence of nontrivial harmonic spinors which are then a main tool in the arguments. 

In odd dimensions, the index of the Dirac operator vanishes so that the index theorem usually cannot be applied directly.
One can then try to reduce to the even-dimensional case using some geometric construction as in Llarull \cites{Ll}.
More conceptually, one can consider families of Dirac operators instead of a single operator and use its spectral flow rather than an index.
If the spectral flow in nonzero, then again we must have a nontrivial harmonic spinor, at least for one of the operators in the family.
For example, this provides a very natural proof of Llarull's theorem in odd dimensions, as shown by Li, Su, and Wang in \cite{LSW}.
We illustrate this in some detail in the last section.
Similarly, Gromov's long neck problem was solved in even dimensions by Cecchini and Zeidler in \cite{CZ} using index theory, while the odd-dimensional case was treated by Shi in \cite{Shi2} using spectral flow.
Furthermore, the fact that infinite $K$-cowaist is an obstruction to positive scalar curvature is classically proved in even dimensions by index theory and recently in odd dimensions by Shi in \cite{Shi1} using spectral flow.
Finally, there are even applications of spectral flow to topological insulators, see e.g.\ \cite{YWX}.

In our setup, the family of Dirac operators arises from a family of metric connections on an arbitrary fixed Hermitian vector bundle over $M$.
We twist the spinorial Dirac operator with these connections and consider the resulting family of twisted Dirac operators.
Getzler proved a formula for the spectral flow of such a family in \cite{G} in case the connections arise from a map of $M\to \UN$ as in our Example~\ref{ex.UN}.
The geometric expression looks remarkably like the Atiyah-Singer index formula in even dimensions.
We simply have to replace the usual (even) Chern character form with an odd version of it.

In this note, we prove  Getzler's result for arbitrary families of metric connections.
In particular, the twist bundle need not be trivial.
More importantly, our proof is much simpler because we reduce the problem to the Atiyah-Patodi-Singer index formula for manifolds with boundary \cite{APS1}.
From an abstract functional analytic point of view, this reduction is well known, see e.g.\ the work of van den Dungen and Ronge \cite{Ronge}.
In fact, we will use their result in Lemma~\ref{lem.sf=ind}.
The relation between spectral flow and the Atiyah-Patodi-Singer index has also been investigated by Ebert in \cite{Ebert} from a $K$-theoretic point of view but this aspect is not needed here.

The main result of this note is the following theorem.

\begin{theorem}
Let $M$ be a closed Riemannian spin manifold of odd dimension and $E \to M$ a hermitian vector bundle. 
Let $\nabla^\bullet=(\nabla^s)_{s \in [a,b]}$ be a smooth 1-parameter family of metric connections on $E$ and $D^s$ the induced twisted Dirac operators acting on sections of $\Sigma M \otimes E$.

Then the spectral flow of the family $D^\bullet=(D^s)_{s \in [a,b]}$ is given by
$$
\sf (D^\bullet) = -\int_M \Ahat(TM) \wedge \cs(\nabla^\bullet) + \xi(D^b) - \xi(D^a).
$$
\end{theorem}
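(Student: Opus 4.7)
The plan is to reduce the spectral flow computation to the Atiyah-Patodi-Singer (APS) index theorem via a cylinder construction, following the strategy behind Lemma~\ref{lem.sf=ind}. First, I would form the even-dimensional Riemannian spin manifold with boundary $Z := [a,b] \times M$ equipped with the product metric; since $M$ is odd-dimensional spin, $Z$ is even-dimensional spin. Let $\pi \colon Z \to M$ be the projection, set $\hat E := \pi^* E$, and define a connection $\hat\nabla$ on $\hat E$ which restricts to $\nabla^s$ on each slice $\{s\} \times M$ and is trivial in the $s$-direction. Since each $\nabla^s$ is metric, so is $\hat\nabla$.

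Next, I would construct the associated twisted Dirac operator $\tilde D$ on $\Sigma Z \otimes \hat E$. Using the standard identification of spinors on a cylinder over an odd-dimensional spin manifold, $\tilde D$ takes the normal form
$$
\tilde D = c(ds)\bigl(\partial_s + D^s\bigr),
$$
so that the family $s\mapsto D^s$ appears precisely as the tangential part of $\tilde D$ at each slice. By Lemma~\ref{lem.sf=ind} (the van den Dungen-Ronge identification), $\sf(D^\bullet)$ equals the Fredholm index $\ind(\tilde D^+_{\APS})$ of the chiral half of $\tilde D$ equipped with APS boundary conditions at both ends of $Z$.

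Applying the APS index theorem \cite{APS1} then yields
$$
\ind(\tilde D^+_{\APS}) = \int_Z \Ahat(TZ) \wedge \ch(\hat\nabla) + \xi(D^b) - \xi(D^a),
$$
with the signs at the boundary determined by $\{a\}\times M$ being incoming and $\{b\}\times M$ outgoing. To match the bulk integral with the right-hand side of the theorem, I would use $TZ \cong \pi^* TM \oplus \underline{\R}$ to get $\Ahat(TZ) = \pi^* \Ahat(TM)$, and then integrate along the fiber $[a,b]$. The resulting fiber integral $\int_a^b \ch(\hat\nabla)$ is, by the Chern-Simons transgression formula applied to the path $\nabla^\bullet$, the odd Chern character form $-\cs(\nabla^\bullet)$, the minus sign being a consequence of the normalization of $\cs$ as a primitive of the transgression.

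The main obstacle is the bookkeeping of signs. Three conventions must be matched: the orientation of $Z$ and of its two boundary components; the sign in the tangential-normal decomposition that puts $\tilde D$ into the form $c(ds)(\partial_s + D^s)$; and the normalization of $\cs(\nabla^\bullet)$ as a transgression of $\ch$. Once these are chosen consistently so that the APS output yields both the minus sign on the bulk integral and the combination $+\xi(D^b)-\xi(D^a)$ at the boundary, the theorem follows. The remaining ingredients -- metricity of $\hat\nabla$, the cylinder form of $\tilde D$, and the transgression computation -- are essentially routine verifications.
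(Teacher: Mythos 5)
Your plan follows the same cylinder-plus-APS strategy as the paper, and the bulk-integral computation is essentially correct, but there is a genuine gap in the boundary bookkeeping that is \emph{not} merely a matter of sign conventions. Lemma~\ref{lem.sf=ind} does not say $\sf(D^\bullet) = \ind\big(\bar{D}^{\bar{E},+}_\APS\big)$; it says $\sf(D^\bullet) = \ind\big(\bar{D}^{\bar{E},+}_\APS\big) + h(D^b)$. The correction $h(D^b)=\dim\ker D^b$ arises because the van den Dungen--Ronge result identifies the spectral flow with the index for the \emph{modified} APS condition at the outgoing boundary (projection onto $(-\infty,0)$ rather than $(-\infty,0]$), and converting from mAPS to APS costs exactly $h(D^b)$ (Corollary~8.8 of \cite{BB}). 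Correspondingly, your claimed APS output is also off by $h(D^b)$: the boundary Dirac operator on $\{b\}\times M$ is $-D^b$ (opposite inward normal), so the APS boundary term is $-\xi(D^a)-\xi(-D^b)$, and $-\xi(-D^b)=\xi(D^b)-h(D^b)$, not $\xi(D^b)$. You have two compensating errors of size $h(D^b)$, so the final formula comes out right, but neither intermediate step is justified as written. The correct route is $\sf = \ind_{\mAPS} = \ind_{\APS} + h(D^b)$, then APS, then observe that $+h(D^b)$ is exactly what turns $-\xi(-D^b)$ into $\xi(D^b)$.

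A second, smaller omission: the APS index theorem you invoke requires $(Z,\hat\nabla)$ to be of product type near $\partial Z$, which fails for a general family $\nabla^\bullet$ since $\hat\nabla$ has a nontrivial $s$-derivative in the collar. The paper repairs this by reparametrizing with a function $\phi\colon[a,b]\to[a,b]$ that is constant near both endpoints, using reparametrization invariance of $\cs$ and homotopy invariance of spectral flow; you would need the same device before applying the APS formula.
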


Here $\Ahat(TM)$ denotes the $\Ahat$-form of the tangent bundle with respect to the Levi-Civita connection and $\cs(\nabla^\bullet)$ the odd Chern character form of the family of connections.
Moreover, $\xi(D)$ denotes the $\xi$-invariant of the operator $D$, see the next section for details.

Since the $\xi$-invariant depends only on the spectrum of the operator, we obtain the following corollary.

\begin{corollary}
If, in addition to the assumptions in the theorem, the operators $D^a$ and $D^b$ are isospectral (e.g.\ if $D^a=D^b$), then 
$$
\sf (D^\bullet) = -\int_M \Ahat(TM) \wedge \cs(\nabla^\bullet).
$$
\end{corollary}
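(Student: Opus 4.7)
The plan is to deduce this corollary directly from the Theorem. Assuming the Theorem, we already have
$$
\sf(D^\bullet) = -\int_M \Ahat(TM) \wedge \cs(\nabla^\bullet) + \xi(D^b) - \xi(D^a),
$$
so the only thing to verify is that $\xi(D^a) = \xi(D^b)$ under the isospectrality hypothesis. First I would recall the definition of $\xi$: it is built from the $\eta$-function $\eta(D,s) = \sum_{\lambda \neq 0}\sign(\lambda)|\lambda|^{-s}$ (meromorphically continued to $s=0$) together with a contribution from $\dim \ker D$. Both ingredients manifestly depend only on the spectrum of $D$ counted with multiplicities. Hence if $D^a$ and $D^b$ have the same spectrum with multiplicities, the two $\xi$-invariants agree and the difference in the Theorem cancels.

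The parenthetical special case $D^a = D^b$ is of course trivially isospectral, so no separate argument is needed. There is no real obstacle here since all the analytic work has already been absorbed into the Theorem; the only mild subtlety worth a sentence of care is to confirm that the normalisation of $\xi$ used in the preceding section is indeed spectrum-determined (which it is, as both $\eta(D,0)$ and $\dim\ker D$ are). With that noted, the corollary follows in a single line from the Theorem.
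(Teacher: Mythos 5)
Your proof is correct and follows exactly the paper's line of reasoning: the paper also notes just before the corollary that $\xi$ depends only on the spectrum, so isospectrality forces $\xi(D^a)=\xi(D^b)$ and the $\xi$-terms in the theorem cancel. Nothing further is needed.
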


This note is organized as follows:
In Section~\ref{sec.Setup} we recall the necessary concepts and tools, including the $\eta$- and $\xi$-invariants, characteristic forms, the Atiyah-Patodi-Singer index theorem, and spectral flow.
In Section~\ref{sec.Proof} we carry out the proof of the theorem.
In the first example of the final section we discuss Getzler's setup.
The second example concerns a specific family of connections on the spinor bundle of the sphere.
We illustrate its usefulness by giving a streamlined proof of Llarull's rigidity theorem for strictly convex hypersurfaces of Euclidean space in the spirit of \cite{LSW}.

\medskip

\textit{Acknowledgments:} 
The authors thank Oskar Riedler for helpful discussions.
CB was partially supported by the DFG through the SPP 2026 \emph{Geometry at Infinity}.
RZ was partially funded by the EU through the ERC Starting Grant 101116001 \emph{COMSCAL}. 
Views and opinions expressed are however those of the authors only and do not necessarily reflect those of the European Union or the European Research Council. 
Neither the European Union nor the granting authority can be held responsible for them.

\section{Setup and preliminaries}
\label{sec.Setup}

We describe the setup and provide the necessary background.

\subsection{\texorpdfstring{The $\eta$- and $\xi$-invariants}{The η- and ξ-invariants}}
We start with the $\eta$-invariant.
Let $D$ be a self-adjoint elliptic differential operator of positive order acting on sections of a Hermitian vector bundle over a closed Riemannian manifold.
Then $D$ has discrete real spectrum consisting of eigenvalues of finite multiplicity only.
The $\eta$-function of $D$ is defined for $z\in\C$ by
$$
\eta(D,z) := \sum_{\lambda \neq0} \sign(\lambda) |\lambda|^{-z}.
$$
Here the summation is over all nonzero eigenvalues $\lambda$ of $D$ repeated according to multiplicity.
It follows from the Weyl asymptotics that this series converges for $\Re(z)$ sufficiently large and defines a holomorphic function on the corresponding half-plane in $\C$.
The $\eta$-function extends to a meromorphic function on all of $\C$ and does not have a pole at $z=0$, see e.g.\ Theorem~4.3.8 in \cite{Gilkey}.
The $\eta$-invariant of $D$ is then defined by
$$
\eta(D) := \eta(D,0).
$$

We denote by $h(D)$ the (finite) dimension of the kernel of $D$ and define the $\xi$-invariant of $D$ by
$$
\xi(D) := \frac{\eta(D) + h(D)}{2},
$$
following Atiyah, Patodi, and Singer \cite{APS2}.
Note that $\eta(D)$ and $\xi(D)$ depend only on the spectrum of $D$.

\subsection{Characteristic forms}
We will use the following convention for characteristic forms: 
Given a local frame $(e_i)_{i \in \{1, \dotsc , N\}}$ for a vector bundle $E \to M$ with connection $\nabla$, the connection 1-form $\omega=(\omega_i^j)$ and curvature 2-form $\Omega=(\Omega_i^j)$ with respect to that frame are defined by
$$
\nabla_X e_i = \sum_{j=1}^N \omega^j_i(X) e_j
$$
and
$$
\nabla_X\nabla_Y e_i - \nabla_Y\nabla_X e_i - \nabla_{[X,Y]} e_i = \sum_{j=1}^N \Omega_i^j(X,Y) e_j.
$$
For a formal power series $q\in\R\llbracket x\rrbracket$, the form $q(\Omega/(2\pi i))$ is well defined because $\Omega$ is a matrix of 2-forms and hence nilpotent.
The associated additive characteristic form is given by 
$$
\tr\Big( q\Big(\frac{\Omega}{2\pi i}\Big) \Big) \in C^\infty(M,\Lambda^{\text{even}}T^*M).
$$
This form is closed and independent of the choice of local frame and hence globally defined.
For example, the Chern character form is the characteristic form associated with $q(x)=\exp(x)$.

Now let $\nabla^\bullet=(\nabla^s)_{s \in [a,b]}$ be a smooth 1-parameter family of connections on a bundle $E\to M$.
Fix a local frame (independent of $s$) and denote the resulting connection 1-form of $\nabla^s$ by $\omega^s$ and the curvature 2-form by $\Omega^s$.
Then the corresponding odd characteristic form is defined by
$$
\int_a^b \tr\Big(\frac{\partial_s \omega^s}{2\pi i} \wedge q'\Big(\frac{\Omega^s}{2\pi i}\Big)\Big) \, ds
\in C^\infty(M,\Lambda^{\text{odd}}T^*M) .
$$
For example, the odd Chern character form is given by
$$
\cs(\nabla^\bullet) 
= 
\int_a^b \tr\Big(\frac{\partial_s \omega^s}{2\pi i} \wedge \exp\Big(\frac{\Omega^s}{2\pi i}\Big)\Big) \, ds .
$$
Again, the odd characteristic form is independent of the choice of local frame and hence globally defined.
It is also invariant under reparametrizations of the family $\nabla^\bullet$.
More precisely, let $\phi: [c,d] \to [a,b]$ be a smooth function with $\phi(c)=a$ and $\phi(d)=b$.
Then we find, substituting $\sigma=\phi(s)$,
\begin{align}
\int_c^d \tr\Big(\frac{\partial_s \omega^{\phi(s)}}{2\pi i} \wedge q'\Big(\frac{\Omega^{\phi(s)}}{2\pi i}\Big)\Big) \, ds 
&=
\int_c^d \tr\Big(\frac{\partial_\sigma \omega^{\sigma}}{2\pi i}\frac{d\sigma}{ds} \wedge q'\Big(\frac{\Omega^{\sigma}}{2\pi i}\Big)\Big) \, ds \notag\\
&=
\int_a^b \tr\Big(\frac{\partial_\sigma \omega^{\sigma}}{2\pi i} \wedge q'\Big(\frac{\Omega^{\sigma}}{2\pi i}\Big)\Big) \, d\sigma .
\label{eq.parameterinvariant}
\end{align}
Thus the families $\nabla^\bullet$ and $\nabla^{\phi(\bullet)}$ have the same odd characteristic form.

If we pull back the bundle $E$ together with the family of connections $\nabla^\bullet$ via a smooth map $f: N \to M$, then the curvature and connection forms pull back accordingly when taken with respect to the pulled back local frame.
Therefore, the odd characteristic form is natural under pullbacks.
In particular, this means for the odd Chern character form that
$$
\cs(f^*\nabla^\bullet) = f^*\cs(\nabla^\bullet).
$$

\subsection{The Atiyah-Patodi-Singer index theorem}

Let $(X,g)$ be an even-dimensional compact Riemannian spin manifold with boundary $\partial X$ and $E \to X$ a hermitian vector bundle with metric connection $\nabla^E$. 
Denote the twisted Dirac operator of $X$ acting on sections of $\Sigma X \otimes E$ by $D^E$ and the twisted Dirac operator of $\partial X$ acting on sections of $\Sigma \partial X \otimes E$ by $A^E$.
Since $\nabla^E$ is metric, both operators are self-adjoint.

We say that $(X,\nabla^E)$ is of product type near $\partial X$ if there is a neighbourhood $Y$ of $\partial X$ in $X$, a diffeomorphism $\psi\colon \partial X\times [0,\varepsilon)\to Y$ and a vector bundle isomorphism $\Psi\colon \pi^*(E\vert_{\partial X})\to E\vert_Y$ covering $\psi$ where $\pi\colon \partial X\times [0,\varepsilon)\to \partial X$ is the projection such that
\begin{itemize}
\item 
$\psi^*g = \iota^*g + dt^2$ where $\iota\colon \partial X \to X$ is the inclusion and $t\in[0,\varepsilon)$ is the standard coordinate,
\item
$\Psi^*\nabla^E = \pi^*(\nabla^E\vert_{\partial X})$.
\end{itemize}

Since $X$ is even-dimensional, the spinor bundle $\Sigma X$ splits into the bundles of spinors of positive and negative chirality, $\Sigma X = \Sigma^+ X \oplus \Sigma^- X$.
With respect to the splitting
$$
\Sigma X \otimes E = (\Sigma^+ X \otimes E) \oplus (\Sigma^- X \otimes E),
$$
the twisted Dirac operator $D^E$ has the form
$$
D^E = \begin{pmatrix}0 & D^{E,-} \\ D^{E,+} & 0 \end{pmatrix} .
$$
We consider $D^{E,+}$ as a bounded operator from the Sobolev space $H^{1}(X; \Sigma^+ X \otimes E)$ to $L^2(X; \Sigma^- X \otimes E)$.

There is a canonical identification $\Sigma^+ X\vert_{\partial X} = \Sigma \partial X$ of Hermitian vector bundles.
Hence we have
$$
(\Sigma^+ X \otimes E)\vert_{\partial X} = \Sigma \partial X \otimes E\vert_{\partial X}.
$$
For any subset $I\subset\R$ denote its characteristic function by $\chi_I\colon \R\to\{0,1\}$.
Consider the spectral projector
$$
P := \chi_{[0,\infty)}(A^E) : L^2(\partial X; \Sigma \partial X \otimes E\vert_{\partial X}) \to L^2(\partial X; \Sigma \partial X \otimes E\vert_{\partial X}).
$$
We say that $u\in H^1(X; \Sigma^+ X \otimes E)$ satisfies the Atiyah-Patodi-Singer (APS) boundary condition if $P(u\vert_{\partial X})=0$.
We write $D^{E,+}_\APS$ for the operator $D^{E,+}$ with domain consisting of all $u\in H^1(X; \Sigma^+ X \otimes E)$ satisfying the APS boundary condition and codomain $L^2(X; \Sigma^- X \otimes E)$.

\begin{theorem}[Atiyah-Patodi-Singer index theorem \cite{APS1}*{Thm.~3.10}]
The operator $D^{E,+}_\APS$ is Fredholm.
If $(X,\nabla^E)$ is of product type near $\partial X$, then its index is given by
$$
\ind\big(D^{E,+}_\APS\big)  
= 
\int_X \Ahat(TX, g) \wedge \ch(\nabla^E)  - \xi(A^E).
$$
\end{theorem}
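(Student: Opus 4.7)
The plan is to prove the theorem by the heat equation method, splitting the argument into a Fredholmness step, a McKean--Singer identity, and a small-time asymptotic analysis that separates into an interior bulk contribution and a boundary correction.

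\textbf{Fredholmness.} First I would exploit the product structure near $\partial X$: on $\partial X\times[0,\varepsilon)$ the Dirac operator takes the form $D^E = \sigma(\partial_t + A^E)$ for a parallel unitary bundle isomorphism $\sigma$. Decomposing sections in an $L^2$-orthonormal eigenbasis of $A^E$ turns $D^{E,+}u=0$ on the model half-cylinder $\partial X\times[0,\infty)$ into a family of scalar ODEs $\partial_t f_\lambda = \mp\lambda f_\lambda$. Under the APS condition $P(u\vert_{\partial X})=0$ the nonnegative eigenmodes are killed and only exponentially decaying modes survive, so a boundary parametrix for $D^{E,+}_\APS$ can be constructed as a Poisson-type operator from the negative spectral data of $A^E$. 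Patching this with an interior pseudodifferential parametrix yields an inverse modulo compact operators and hence Fredholmness.

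\textbf{McKean--Singer and the interior term.} The adjoint $D^{E,-}_\APS$ carries the complementary boundary condition $\chi_{(-\infty,0)}(A^E)$, and both heat semigroups $e^{-tD^{E,-}_\APS D^{E,+}_\APS}$ and $e^{-tD^{E,+}_\APS D^{E,-}_\APS}$ are trace-class on the compact manifold $X$. McKean--Singer then gives
$$
\ind(D^{E,+}_\APS) = \Tr\bigl(e^{-tD^{E,-}_\APS D^{E,+}_\APS}\bigr) - \Tr\bigl(e^{-tD^{E,+}_\APS D^{E,-}_\APS}\bigr)
$$
for every $t>0$. Letting $t\to 0^+$, local index theory for Dirac-type operators (most cleanly via Getzler's rescaling) identifies the pointwise supertrace of the heat kernel away from $\partial X$ with the top-degree component of $\Ahat(TX)\wedge\ch(\nabla^E)$, producing the bulk term $\int_X \Ahat(TX,g)\wedge\ch(\nabla^E)$.

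\textbf{Boundary contribution.} The main obstacle is showing that the remaining defect equals $-\xi(A^E)$. Here the product-structure hypothesis is crucial: it lets one replace the heat kernel in a collar neighborhood of $\partial X$ by the explicit model heat kernel on $\partial X\times[0,\infty)$ with APS boundary conditions, with errors that vanish in trace-norm as $t\to 0^+$. Separation of variables along the $A^E$-eigenbasis reduces the boundary supertrace to a sum over the spectrum of $A^E$ of one-dimensional half-line heat traces, each of which can be computed in closed form. Summing these contributions and invoking the Mellin transform identity
$$
\Gamma\bigl(\tfrac{z+1}{2}\bigr)\,\eta(A^E,z) = \int_0^\infty t^{(z-1)/2}\,\Tr\bigl(A^E e^{-t(A^E)^2}\bigr)\,dt,
$$
together with a separate bookkeeping of the contribution of $\ker A^E$, one extracts the finite constant at $t=0$ and identifies it as $-\tfrac12(\eta(A^E)+h(A^E)) = -\xi(A^E)$. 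The hardest step is the analytic control of this procedure, namely establishing the uniform small-$t$ heat expansion up to $\partial X$ and justifying the regularization that converts the boundary heat trace into the value $\eta(A^E,0)$; this is where the product structure is really used and is the part that requires the most technical care.
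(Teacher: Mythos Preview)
The paper does not prove this theorem; it is quoted as a classical result from \cite{APS1}*{Thm.~3.10} and used as a black box in the proof of the main spectral-flow formula in Section~\ref{sec.Proof}. So there is no proof in the paper to compare your proposal against.

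That said, your outline is a faithful high-level sketch of the original heat-equation approach of Atiyah--Patodi--Singer, with the modern simplification of invoking Getzler rescaling for the interior local index density rather than the invariance-theory arguments available in 1975. One small mismatch with the statement as formulated here: the paper asserts Fredholmness of $D^{E,+}_\APS$ \emph{without} the product-type hypothesis, reserving that assumption only for the index formula, whereas your parametrix construction on the half-cylinder already uses the product form $\sigma(\partial_t + A^E)$. To match the statement exactly you would need either to appeal to general elliptic boundary value theory (as in \cite{BB}) for the Fredholm part, or to argue by deformation that Fredholmness persists away from the product case. Apart from that, the strategy is sound, with the caveat you yourself flag: the uniform small-$t$ control near the boundary and the regularization extracting $-\xi(A^E)$ from the boundary heat trace are each substantial analytic undertakings that a full proof would have to carry out in detail.
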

Here $\Ahat(TX, g)$ denotes the $\Ahat$-form of $TX$ equipped with the Levi-Civita connection for $g$.

\subsection{Spectral flow}
Let $D^\bullet=(D^s)_{s \in [a,b]}$ be a smooth family of self-adjoint elliptic differential operators of positive order acting on sections of a vector bundle over a closed manifold.
Smoothness of the family means that locally the coefficients of the differential operators depend smoothly on the parameter $s$.

Intuitively, the spectral flow of the family $D^\bullet$ counts how many negative eigenvalues become nonnegative minus the number of nonnegative eigenvalues becoming negative as the parameter $s$ runs from $a$ to $b$.
We describe the spectral flow following \cite{Phillips}.
If there exists an $a>0$ such that $\pm a$ are not in the spectrum of $D^s$ for any $s \in [a,b]$, then the rank of the spectral projector $\chi_{[-a,a]}(D^s)$ is finite and constant in $s$.
In this case, the spectral flow is given by 
$$
\sf(D^\bullet) = \rk\big(\chi_{[0,a]}(D^b)\big) - \rk\big(\chi_{[0,a]}(D^a)\big).
$$
In general, one can subdivide the interval $[a,b]$ as $a=s_0<s_1<\cdots<s_N=b$ and find $a_i>0$ such that $\pm a_i$ are not in the spectrum of $D^s$ for any $s \in [s_{i-1},s_{i}]$.
Then the spectral flow is given by
$$
\sf(D^\bullet) = \sum_{i=1}^N \Big( \rk\big(\chi_{[0,a_i]}(D^{s_i})\big) - \rk\big(\chi_{[0,a_i]}(D^{s_{i-1}})\big) \Big).
$$
This definition does not depend on the choices.
The spectral flow is invariant under homotopies of the family $D^\bullet$ with fixed endpoints.
A detailed discussion of the spectral flow can be found in the book \cite{SpectralFlowBook}.

\section{Proof of the theorem}
\label{sec.Proof} 

Having introduced the necessary concepts and tools, we can now proceed to the proof of the theorem.
Let $(M,g)$ be a closed odd-dimensional Riemannian spin manifold and $\nabla^\bullet=(\nabla^s)_{s \in [a,b]}$ a smooth 1-parameter family of metric connections on a Hermitian vector bundle $E \to M$. 
Put
$$
X := M \times [a,b]
$$
and denote by $\pi: X \to M$ the canonical projection. 
We equip $X$ with the product metric $\bar{g}=\pi^*g + dt^2$ where $t \in [a,b]$ is the standard coordinate.
We pull back the bundle $E$ and obtain the bundle $\bar{E}=\pi^*E \to X$.
Each section $\bar{u}$ of $\bar{E}$ can be identified with a family of sections $u$ of $E$ depending on the parameter $s\in[a,b]$.
We define a connection $\bar{\nabla}$ on $\bar{E}$ by
$$
\bar{\nabla}_{v + \alpha \partial_t\vert_s} \bar{u} = \nabla_v^s u + \alpha \partial_s u
$$
for $v\in TM$ and $\alpha\in\R$.
    
\begin{lemma}
Let $e_1,...,e_N$ be a local frame of $E$.
Let $\omega^s$ be the connection 1-form and $\Omega^s$ the curvature 2-form of $\nabla^s$ with respect to that frame.
Then $\bar{e}_1=e_1\circ\pi,...,\bar{e}_N=e_N\circ\pi$ is a local frame of $\bar{E}$ with respect to which the curvature 2-form of $\bar{\nabla}$ is given by
$$
\bar\Omega = \pi^*\Omega^{t} + dt \wedge \pi^*\partial_t\omega^{t}.
$$
\end{lemma}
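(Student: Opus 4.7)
The plan is to first pin down the connection 1-form $\bar\omega$ of $\bar\nabla$ with respect to the frame $(\bar e_i)$ and then derive the curvature from the structure equation $\bar\Omega = d\bar\omega + \bar\omega\wedge\bar\omega$.

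Since $\bar e_i = e_i\circ\pi$ corresponds, under the identification of sections of $\bar E$ with $s$-families of sections of $E$, to the constant family $u^s \equiv e_i$, the defining formula for $\bar\nabla$ immediately yields $\bar\nabla_{\partial_t}\bar e_i = \partial_s e_i = 0$, while for $v \in TM$ lifted horizontally we get $\bar\nabla_v\bar e_i = \nabla_v^t e_i = \sum_j (\omega^t)_i^j(v)\,\bar e_j$. Hence $\bar\omega_i^j = \pi^*(\omega^t)_i^j$, with the understanding that at a point $(p,t)\in X$ one evaluates the family $\omega^s$ at $s=t$; in particular $\bar\omega$ has no $dt$-component.

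The quadratic term of the structure equation is then automatic: $\bar\omega\wedge\bar\omega = \pi^*(\omega^t\wedge\omega^t)$. For the exterior derivative I would work in local coordinates $(x^1,\dots,x^n)$ on $M$ and write $\omega^s = \sum_k A_k(x,s)\,dx^k$, so that $\bar\omega = \sum_k A_k(x,t)\,dx^k$ on $X$. The total differential then splits into an $M$-piece and a $dt$-piece,
$$
d\bar\omega = \pi^*(d^M\omega^t) + dt\wedge\pi^*(\partial_t\omega^t),
$$
and adding back $\bar\omega\wedge\bar\omega$ recovers
$$
\bar\Omega = \pi^*\bigl(d^M\omega^t + \omega^t\wedge\omega^t\bigr) + dt\wedge\pi^*(\partial_t\omega^t) = \pi^*\Omega^t + dt\wedge\pi^*\partial_t\omega^t.
$$

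The only real subtlety is carefully separating the horizontal exterior derivative from the $t$-derivative of the time-dependent form $\omega^s$; this bookkeeping is what produces the $dt\wedge\pi^*\partial_t\omega^t$ contribution, and is the main (essentially mechanical) obstacle. An equivalent route, which sidesteps the structure equation entirely, is to compute $\bar\Omega(V,W)$ directly from its defining formula in two cases: for horizontal pairs $V,W\in TM$ one just reproduces the curvature computation of $\nabla^t$ at the fixed parameter $t$, recovering $\pi^*\Omega^t$; for $V=\partial_t$ and $W\in TM$ extended to be $t$-independent so that $[\partial_t,W]=0$, one uses $\bar\nabla_{\partial_t}\bar e_i=0$ together with $\bar\nabla_W\bar e_i=\sum_j(\omega^t)_i^j(W)\bar e_j$ and differentiates the coefficients in $t$ to obtain the $\partial_t\omega^t(W)$ term.
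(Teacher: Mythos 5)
Your proposal is correct and follows essentially the same route as the paper: first pin down that $\bar\omega$ has no $dt$-component and equals $\omega^t$ on horizontal vectors, then feed this into the structure equation $\bar\Omega = d\bar\omega + \bar\omega\wedge\bar\omega$. The only difference is bookkeeping in the evaluation of $d\bar\omega$ — you split the total differential in local coordinates, whereas the paper evaluates $d\bar\omega$ on tangent vectors using synchronous extensions — but both lead to the same identification of the extra $dt\wedge\pi^*\partial_t\omega^t$ term.
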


\begin{proof}
For $v + \alpha \partial_t \in T_{(p,s)}X=T_pM\oplus\R\cdot\partial_t\vert_s$ we have
\begin{align*}
\bar{\nabla}_{v + \alpha \partial_t} \bar e_i 
= 
\nabla_v^s e_i  + \alpha \partial_t|_{t=s} e_i
= 
\nabla_v^s e_i 
= 
\sum_{j=1}^N (\omega^s)_i^j(v) \bar e_j.
\end{align*}
Hence,
\begin{align*}
\bar\omega(v + \alpha \partial_t\vert_s) = \omega^s(v).
\end{align*}
Let $v + \alpha\partial_t, w + \beta\partial_t \in T_{(p,s)}X$.
We extend both tangent vectors to local vector fields which are synchronous at the point $(p,s)$.
Using the structural equation $\bar{\Omega}=d\bar{\omega} + \bar{\omega} \wedge \bar{\omega}$, we compute:
\begin{align*}
\bar\Omega^i_k&(v + \alpha \partial_t, w + \beta \partial_t) 
=
d\bar\omega^i_k(v + \alpha \partial_t, w + \beta\partial_t) + (\bar\omega \wedge \bar \omega)^i_k (v + \alpha\partial_t, w + \beta\partial_t) \\
&=
(v + \alpha \partial_t) (\bar\omega^i_k(w + \beta\partial_t)) - (w + \beta\partial_t)(\bar\omega^i_k(v + \alpha\partial_t)) + \sum_{j=1}^N (\bar\omega^i_j \wedge \bar\omega^j_k) (v + \alpha\partial_t, w + \beta\partial_t) \\
&=
v((\omega^{s})^i_k(w)) - w((\omega^{s})^i_k(v)) + \alpha \partial_t|_{t=s} (\omega^{t})^i_k(w) - \beta \partial_t|_{t=s} (\omega^{t})^i_k(v)
 + \sum_{j=1}^N ((\omega^{s})^i_j \wedge (\omega^{s})^j_k)(v,w) \\
&=
d(\omega^{s})^i_k(v,w) + \alpha \partial_t|_{t=s} (\omega^{t})^i_k(w) - \beta \partial_t|_{t=s}(\omega^{t})^i_k(v) + (\omega^{s} \wedge \omega^{s})^i_k(v,w) \\
&=
(\Omega^{s})^i_k(v,w) + \alpha \partial_t|_{t=s} (\omega^{t})^i_k(w) - \beta \partial_t|_{t=s}(\omega^{t})^i_k(v) .
\end{align*}
Employing
\begin{align*}
(dt \wedge \partial_t \omega^{t})(v + \alpha\partial_t, w + \beta\partial_t) 
&= 
\det \begin{pmatrix} dt(v + \alpha \partial_t) & (\partial_t \omega^{t})(v + \alpha\partial_t) \\ dt(w + \beta\partial_t) & (\partial_t\omega^{t})(w + \beta\partial_t) \end{pmatrix} \\
&= 
\det\begin{pmatrix} \alpha & \partial_t \omega^{t} (v) \\ \beta & \partial_t \omega^{t}(w) \end{pmatrix} \\
&= 
\alpha \partial_t \omega^{t} (w) - \beta \partial_t\omega^{t}(v)
\end{align*}
we find
\begin{equation*}
\bar\Omega = \pi^*\Omega^{t} + dt \wedge \pi^*\partial_t\omega^{t}.
\qedhere
\end{equation*}
\end{proof}

\begin{lemma}\label{lem.OmegaOben}
For any power series $q\in\R\llbracket x\rrbracket$, we have
$$
\tr\Big( q\Big(\frac{\bar\Omega}{2\pi i}\Big) \Big)
=
\pi^*\tr\Big( q\Big(\frac{\Omega^{t}}{2\pi i}\Big) \Big) + dt \wedge \pi^*\tr\Big(\frac{\partial_t \omega^{t}}{2\pi i} \wedge q'\Big(\frac{\Omega^{t}}{2\pi i}\Big)\Big).
$$
\end{lemma}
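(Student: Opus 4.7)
\medskip

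\noindent\textbf{Proof proposal.} By linearity of both sides in $q$, it suffices to treat the case of a monomial $q(x) = x^n$; the general case follows by summation of the formal power series. So I will focus on showing
\begin{equation*}
\tr\bigl(\bar\Omega^n\bigr) = \pi^*\tr\bigl((\Omega^t)^n\bigr) + n\,dt\wedge \pi^*\tr\bigl(\partial_t\omega^t \wedge (\Omega^t)^{n-1}\bigr),
\end{equation*}
after which dividing through by $(2\pi i)^n$ and using $q'(x)=nx^{n-1}$ gives the claim.

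To prove this, I substitute $\bar\Omega = A + dt\wedge B$ with $A := \pi^*\Omega^t$ and $B := \pi^*\partial_t\omega^t$, noting that $A$ is a matrix of horizontal $2$-forms (so in particular of even total degree) and $B$ is a matrix of horizontal $1$-forms. Expanding $(A + dt\wedge B)^n$ as a sum of wedge products and using $dt\wedge dt = 0$, every term containing two or more factors of $dt\wedge B$ vanishes. What remains is
\begin{equation*}
\bar\Omega^n = A^n + \sum_{k=0}^{n-1} A^k\wedge (dt\wedge B)\wedge A^{n-1-k}.
\end{equation*}

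Since each $A^k$ is a matrix of forms of even degree $2k$, it commutes with $dt$: $A^k\wedge dt = dt\wedge A^k$. Pulling the $dt$ to the front in each term of the sum turns the expression into
\begin{equation*}
\bar\Omega^n = A^n + dt\wedge \sum_{k=0}^{n-1} A^k\wedge B\wedge A^{n-1-k}.
\end{equation*}
Taking the trace and applying graded cyclicity $\tr(\alpha\wedge\beta) = (-1)^{|\alpha||\beta|}\tr(\beta\wedge\alpha)$: moving the rightmost $A^{n-1-k}$ (even degree $2(n-1-k)$) to the front of $A^k\wedge B$ introduces only the sign $(-1)^{\text{even}\cdot(2k+1)} = +1$, so every summand equals $\tr(A^{n-1}\wedge B) = \tr(B\wedge A^{n-1})$ (again by even-degree cyclicity). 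Thus
\begin{equation*}
\tr\bigl(\bar\Omega^n\bigr) = \tr\bigl(A^n\bigr) + n\,dt\wedge \tr\bigl(B\wedge A^{n-1}\bigr),
\end{equation*}
which is exactly what is needed, once the horizontal forms $A$, $B$ are written back as pullbacks.

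There is no real obstacle here; the only thing to watch is the sign bookkeeping in the graded cyclicity. Fortunately all signs trivialize because $A$ has even total degree, so no $(-1)$ factors appear and the $n$ summands all coincide, producing the derivative factor $n$ that becomes $q'$ after summation over monomials.
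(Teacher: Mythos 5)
Your proof is correct and follows essentially the same route as the paper: reduce to monomials, expand the $n$-th power of $\bar\Omega = \pi^*\Omega^t + dt\wedge\pi^*\partial_t\omega^t$ discarding terms with two factors of $dt$, and use (graded) cyclicity of the trace to collapse the $n$ surviving cross terms into $n\,dt\wedge\tr(B\wedge A^{n-1})$. You make the sign bookkeeping in the cyclicity step explicit where the paper leaves it implicit, but the argument is the same.
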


\begin{proof}
For each $n\in\N$ we have
\begin{align*}
\bar{\Omega}^n
&=
\big(\pi^*\Omega^{t} + dt \wedge \pi^*\partial_t\omega^{t}\big)^n 
=
\big(\pi^*\Omega^{t}\big)^n + \sum_{k=1}^n\big(\pi^*\Omega^{t}\big)^{k-1} \wedge \big(dt \wedge \pi^*\partial_t\omega^{t}\big) \wedge \big(\pi^*\Omega^{t}\big)^{n-k} 
\end{align*}
and therefore
\begin{align*}
\tr(\bar{\Omega}^n)
&=
\tr\big(\big(\pi^*\Omega^{t}\big)^n\big) + n \tr\big(\big(dt \wedge \pi^*\partial_t\omega^{t}\big) \wedge \big(\pi^*\Omega^{t}\big)^{n-1} \big) \\
&=
\pi^*\tr\big((\Omega^{t})^n\big) + dt \wedge \pi^*\tr\big(\partial_t\omega^{t} \wedge n(\Omega^{t})^{n-1} \big) .
\end{align*}
The claim follows.
\end{proof}

\begin{example}
For $q=\exp$ we have $q'=\exp$ and hence
\begin{equation}
\ch(\bar{\nabla}) = \pi^*\ch(\nabla^{t}) + dt \wedge \pi^*\tr\Big(\frac{\partial_t \omega^{t}}{2\pi i} \wedge \exp\Big(\frac{\Omega^{t}}{2\pi i}\Big)\Big).
\label{eq.chOben}
\end{equation}
\end{example}

\begin{remark}
It is well known that the characteristic forms $\tr\Big( q\Big(\frac{\bar\Omega}{2\pi i}\Big) \Big)$ are closed.
Therefore, Lemma~\ref{lem.OmegaOben} implies
\begin{align*}
0
&=
d\tr\Big( q\Big(\frac{\bar\Omega}{2\pi i}\Big) \Big) \\
&=
d\Big[\pi^*\tr\Big( q\Big(\frac{\Omega^{t}}{2\pi i}\Big) \Big) + dt \wedge \pi^*\tr\Big(\frac{\partial_t \omega^{t}}{2\pi i} \wedge q'\Big(\frac{\Omega^{t}}{2\pi i}\Big)\Big)\Big]\\
&=
\pi^*d\tr\Big( q\Big(\frac{\Omega^{t}}{2\pi i}\Big) \Big) - dt \wedge \pi^*d\tr\Big(\frac{\partial_t \omega^{t}}{2\pi i} \wedge q'\Big(\frac{\Omega^{t}}{2\pi i}\Big)\Big)\\
&=
- \pi^*d\tr\Big(\frac{\partial_t \omega^{t}}{2\pi i} \wedge q'\Big(\frac{\Omega^{t}}{2\pi i}\Big)\Big) \wedge dt.
\end{align*}
Thus,
$$
0=\pi^*d\int_a^b \tr\Big(\frac{\partial_t \omega^{t}}{2\pi i} \wedge q'\Big(\frac{\Omega^{t}}{2\pi i}\Big)\Big) \, dt,
$$
which implies
$$
d\int_a^b \tr\Big(\frac{\partial_t \omega^{t}}{2\pi i} \wedge q'\Big(\frac{\Omega^{t}}{2\pi i}\Big)\Big) \, dt = 0,
$$
i.e., the odd characteristic form is closed as well.
\end{remark}

We denote the Dirac operator on $X$ twisted with $(\bar{E},\bar{\nabla})$  by $\bar{D}^{\bar{E}}$ and the one on $M$ twisted with $(E,\nabla^t)$ by~$D^t$.

\begin{lemma}\label{lem.sf=ind}
We have
$$
\sf(D^\bullet) = \ind\big(\bar{D}^{\bar{E},+}_\APS\big) + h(D^b).
$$
\end{lemma}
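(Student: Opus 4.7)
The plan is to identify $\bar{D}^{\bar{E},+}$ on the cylinder $X = M \times [a,b]$ with the standard operator $\partial_t + D^t$ appearing in the abstract spectral-flow-as-index theorem of van den Dungen and Ronge \cite{Ronge}, and then read off the stated formula. Concretely, I would first compute $\bar{D}^{\bar{E}}$ in cylindrical form. With the product metric $\bar g = \pi^* g + dt^2$ and the connection $\bar\nabla$, which differentiates trivially in the $\partial_t$-direction, the twisted Dirac operator on $X$ decouples into vertical and horizontal parts. Identifying $\Sigma^+ X$ along a slice $M \times \{t\}$ with $\Sigma M$ via Clifford multiplication by $\partial_t$, one obtains the familiar expression
$$
\bar{D}^{\bar{E},+} \;=\; c(\partial_t)\bigl(\partial_t + D^t\bigr),
$$
where $c(\partial_t)$ is a fiberwise isometry that does not affect the Fredholm index. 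Thus the APS-realization of $\bar{D}^{\bar{E},+}$ is, up to this harmless factor, precisely the cylindrical operator to which \cite{Ronge} applies.

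Applying that abstract result then expresses the spectral flow as the APS index up to a correction from the kernels of $D^a$ and $D^b$. The exact form of the correction is dictated by matching the APS spectral projector $P = \chi_{[0,\infty)}(A^{\bar E})$ to the conventions used in the definition of the spectral flow. On the boundary $\partial X = (M\times\{b\}) \sqcup (M\times\{a\})^{\mathrm{op}}$, the outward unit normal is $+\partial_t$ at $t=b$ and $-\partial_t$ at $t=a$, so the induced boundary Dirac operator equals $+D^b$ on the first component and $-D^a$ on the second. Hence the APS condition suppresses the nonnegative eigenspaces \emph{including} the kernel of $D^b$ at the upper end, while at the lower end it suppresses only the strictly positive eigenspaces of $D^a$, leaving the kernel as admissible boundary data. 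The spectral flow, by contrast, lumps the zero eigenvalue with the nonnegatives at both endpoints. This asymmetry in the treatment of the zero eigenvalues produces precisely the correction $+h(D^b)$, while the contributions from $h(D^a)$ cancel.

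The main obstacle is this sign and convention bookkeeping: verifying the cylindrical identification with the correct signs, computing the two boundary Dirac operators, and matching the APS projector at each endpoint to the projector used in the spectral flow definition from \cite{Phillips}, so that the correction term comes out to be exactly $+h(D^b)$ rather than $-h(D^a)$ or some combination of the two. Note that no product-type assumption on $(X,\bar\nabla)$ is required for the present lemma, since we are only relating two numerical invariants (spectral flow and APS index); the product-type condition enters only at the next step, where the APS index theorem is invoked to evaluate $\ind\bigl(\bar{D}^{\bar E,+}_\APS\bigr)$ geometrically in terms of $\Ahat$- and odd Chern character forms.
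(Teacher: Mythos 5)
The overall strategy — identify $\bar{D}^{\bar{E},+}$ in cylindrical form with $\partial_t + D^t$ up to an invertible symbol factor, then invoke the van den Dungen--Ronge theorem and adjust for a kernel correction — is exactly the paper's approach, and your observation that no product-type hypothesis is needed at this stage is also in the paper's spirit. However, the sign and bookkeeping you single out as "the main obstacle" is indeed where the proposal goes wrong, and the errors are not merely notational.

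First, the boundary Dirac operators have the wrong signs. In the normal form $\D = \partial_t + D^t$, the coordinate $t$ increases in the \emph{inward} direction at $t=a$, so there the boundary operator is $+D^a$, and the APS projector is $\chi_{[0,\infty)}(D^a)$; at $t=b$ the inward coordinate is $b-t$, which flips the sign, giving boundary operator $-D^b$ and APS projector $\chi_{[0,\infty)}(-D^b)=\chi_{(-\infty,0]}(D^b)$. You have the opposite assignment ($+D^b$ at the top, $-D^a$ at the bottom). Second, even granting your signs, the claim that the APS condition at $t=a$ "leaves the kernel as admissible boundary data" is inconsistent: the APS projector $\chi_{[0,\infty)}$ always contains $0$ in its support, so the APS condition kills the kernel at \emph{both} endpoints. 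The correct picture is that the kernel of $D^a$ and the kernel of $D^b$ are both suppressed by APS, and the correction term arises because the theorem you cite (\cite{Ronge}*{Theorem~4.9}) equates $\sf(D^\bullet)$ not with $\ind(\D_\APS)$ but with $\ind(\D_\mAPS)$, where mAPS is the modified condition $\chi_{[0,\infty)}(D^a)(u\vert_{t=a})=0$, $\chi_{(-\infty,0)}(D^b)(u\vert_{t=b})=0$ — identical to APS at $t=a$, but strictly larger domain at $t=b$ by allowing the kernel of $D^b$. One then needs a separate deformation argument, namely \cite{BB}*{Cor.~8.8}, to convert $\ind(\D_\mAPS) = \ind(\D_\APS) + h(D^b)$. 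Your statement that "the contributions from $h(D^a)$ cancel" is misleading: $h(D^a)$ never enters at all, because the two boundary conditions coincide at $t=a$.

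So the final formula and the high-level plan are right, but as written the proof would not go through: the sign of the boundary operator determines which spectral half-line is excluded, the description of the APS condition at $t=a$ is wrong, and the precise mechanism producing $+h(D^b)$ (the mAPS-vs-APS comparison via \cite{BB}) is left unaddressed.
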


\begin{proof}
The operator $\bar{D}^{\bar{E},+}$ can be written as 
$$
\bar{D}^{\bar{E},+} =  \gamma(\partial_t + D^t)
$$
where $\gamma$ denotes the principal symbol of $\bar{D}^{\bar{E},+}$ evaluated on the covector $dt$.
In particular, $\gamma$ is an isomorphism.
Thus, $\bar{D}^{\bar{E},+}$ has the same index as the operator $\D:=\partial_t + D^t$.
The APS boundary conditions now read as $\chi_{[0,\infty)}(D^a)(u\vert_{t=a}) =0$ and $\chi_{(-\infty,0]}(D^b)(u\vert_{t=b}) =\chi_{[0,\infty)}(-D^b)(u\vert_{t=b}) =0$.
The opposite signs for $t=a$ and $t=b$ are a consequence of $\partial_t$ being the interior unit normal field along the boundary component $\{t=a\}$ of $X$ and the ourward unit normal field along the component $\{t=b\}$.

Considering the modified APS boundary condition (mAPS) 
$$
\chi_{[0,\infty)}(D^a)(u\vert_{t=a}) =0\quad\text{ and }\quad\chi_{(-\infty,0)}(D^b)(u\vert_{t=b}) =0,
$$
it follows from \cite{Ronge}*{Theorem~4.9} that
$$
\sf(D^\bullet)=\ind(\D_\mAPS).
$$
Corollary~8.8 in \cite{BB} implies
$$
\ind(\D_\mAPS) = \ind(\D_\APS) + h(D^b).
$$
This combines to yield
\begin{equation*}
\sf(D^\bullet)=\ind(\D_\APS) + h(D^b)=\ind\big(\bar{D}^{\bar{E},+}_\APS\big) + h(D^b).
\qedhere
\end{equation*}
\end{proof}

\begin{proof}[\textbf{\emph{Concluding the proof of the theorem}}]
Since $\bar{g}$ is the product metric on $X$, we have $\Ahat(TX,\bar{g})=\pi^*\Ahat(TM,g)$.
We choose a smooth function $\phi: [a,b] \to [a,b]$ with $\phi=a$ near $a$ and $\phi=b$ near $b$.
We use the family $\nabla^{\phi(\bullet)}$ to induce a connection $\bar{\nabla}$ on $\bar{E}=\pi^*E$.
Then $(X,\bar{\nabla})$ is of product type near $\partial X$ and we can apply the Atiyah-Patodi-Singer index theorem.
Lemma~\ref{lem.sf=ind}, the homotopy invariance of the spectral flow, and the Atiyah-Patodi-Singer index theorem applied to $\bar{D}^{\bar{E},+}_\APS$ yield
\begin{align*}
\sf(D^\bullet)
&=
\sf(D^{\phi(\bullet)})
=
\ind\big(\bar{D}^{\bar{E},+}_\APS\big) + h(D^b)
=
\int_X \Ahat(TX, \bar{g}) \wedge \ch(\bar{\nabla}) - (\xi(D^a)+\xi(-D^b)) + h(D^b) .
\end{align*}
For the integral we find, using \eqref{eq.parameterinvariant} and \eqref{eq.chOben},
\begin{align*}
\int_X \Ahat(TX, \bar{g}) \wedge \ch(\bar{\nabla})
&=
\int_X \pi^*\Ahat(TM, g) \wedge \Big[\pi^*\ch(\nabla^{\phi(t)}) + dt \wedge \pi^*\tr\Big(\frac{\partial_t \omega^{\phi(t)}}{2\pi i} \wedge \exp\Big(\frac{\Omega^{\phi(t)}}{2\pi i}\Big)\Big)\Big] \\
&=
\int_X \pi^*\Ahat(TM, g) \wedge  dt \wedge \pi^*\tr\Big(\frac{\partial_t \omega^{\phi(t)}}{2\pi i} \wedge \exp\Big(\frac{\Omega^{\phi(t)}}{2\pi i}\Big)\Big) \\
&=
-\int_X \pi^*\Ahat(TM, g) \wedge \pi^*\tr\Big(\frac{\partial_t \omega^{\phi(t)}}{2\pi i} \wedge \exp\Big(\frac{\Omega^{\phi(t)}}{2\pi i}\Big)\Big) \wedge  dt \\
&=
-\int_M \Ahat(TM, g) \wedge \int_a^b\tr\Big(\frac{\partial_t \omega^{\phi(t)}}{2\pi i} \wedge \exp\Big(\frac{\Omega^{\phi(t)}}{2\pi i}\Big)\Big) dt \\
&=
-\int_M \Ahat(TM, g) \wedge \cs(\nabla^{\phi(\bullet)}) \\
&=
-\int_M \Ahat(TM, g) \wedge \cs(\nabla^\bullet) .
\end{align*}
The boundary term becomes
\begin{align*}
- (\xi(D^a)+\xi(-D^b)) + h(D^b) 
&=
- \frac{\eta(D^a)+h(D^a)+\eta(-D^b)+h(-D^b)}{2} + h(D^b) \\
&=
- \frac{\eta(D^a)+h(D^a)-\eta(D^b)-h(D^b)}{2} \\
&=
\xi(D^b)-\xi(D^a) .
\end{align*}
This concludes the proof of the theorem.
\end{proof}

\section{Examples and an application}

The first example has been discussed by Getzler, using somewhat different wording, compare Theorem~2.8 in \cite{G}.

\begin{numberedexample}\label{ex.UN}
Let $\omega$ be the Maurer-Cartan form on the unitary group $\UN$.
This is a $1$-form on $\UN$ with values in the Lie algebra $\mathfrak{u}(N)$ of skew-Hermitian $N\times N$-matrices.
Consider the family of connections $\nabla^s = d + s\omega$ on the trivial bundle $E = \UN \times \C^N \to \UN$.
Then $s\omega$ is the connection $1$-form of $\nabla^s$ with respect to the global constant standard frame.
The structural equation $d\omega + \omega \wedge \omega = 0$ of the Maurer-Cartan form implies for the curvature 1-form of $\nabla^s$ that 
\begin{align*}
\Omega^s
&=
d(s\omega) + (s\omega) \wedge (s\omega) 
=
s \, d\omega + s^2 \, \omega \wedge \omega  
=
s(s-1) \, \omega \wedge \omega .
\end{align*}
We compute the odd Chern character form of the family $\nabla^\bullet=(\nabla^s)_{s\in[0,1]}$:
\begin{align*}
\cs(\nabla^\bullet)
&=
\int_0^1 \tr\Big(\frac{\partial_s (s\omega)}{2\pi i} \wedge \exp\Big(\frac{\Omega^s}{2\pi i}\Big)\Big) \, ds \\
&=
\int_0^1 \tr\Big(\frac{\omega}{2\pi i} \wedge \exp\Big(\frac{s(s-1) \, \omega^2}{2\pi i}\Big)\Big) \, ds\\
&=
\int_0^1 \tr\Big(\frac{\omega}{2\pi i} \wedge \sum_{k=0}^\infty\frac{1}{k!}\Big(\frac{s(s-1) \, \omega^2}{2\pi i}\Big)^k\Big) \, ds\\
&=
\sum_{k=0}^\infty\frac{1}{k!}\frac{1}{(2\pi i)^{k+1}}\tr\big(\omega^{2k+1}\big) \int_0^1s^k(s-1)^k \, ds\\
&=
\sum_{k=0}^\infty\frac{1}{k!}\frac{1}{(2\pi i)^{k+1}}\tr\big(\omega^{2k+1}\big) \frac{(-1)^k (k!)^2}{(2k+1)!}\\
&=
\sum_{k=0}^\infty\frac{ (-1)^k k!}{(2k+1)!}\frac{\tr\big(\omega^{2k+1}\big)}{(2\pi i)^{k+1}}  .
\end{align*}
By \cite{CS}*{\S 3}, the cohomology classes 
$$
\zeta_{2k+1} := \bigg[\frac{ (-1)^k k!}{(2k+1)!}\frac{\tr\big(\omega^{2k+1}\big)}{(2\pi i)^{k+1}}\bigg]\in H_\deR^{2k+1}(\UN)
$$
are nontrivial for $k=0,1,2,...,N-1$ and generate the de Rham cohomology of $\UN$ as an algebra,
$$
H_\deR^\bullet(\UN) = \Lambda[\zeta_1,\zeta_3,...,\zeta_{2N-1}].
$$
By Bott periodicity (\cite{Bott}*{Equation~(1.5)}), the odd homotopy groups of $\UN$ satisfy $\pi_{2k+1}(\UN) \cong \Z$ for $k=0,1,2,...,N-1$.
We choose a generator $g_{2k+1}$ of $\pi_{2k+1}(\UN)$.
The Hurewicz homomorphism 
$$
h_{2k+1}\colon \pi_{2k+1}(\UN) \to H_{2k+1}(\UN;\Z)
$$
maps it to a class pairing nontrivially with $\zeta_{2k+1}$, see e.g.\ the appendix in \cite{MiMo}.
Put $\alpha_k:=\langle \zeta_{2k+1},h_{2k+1}(g_{2k+1})\rangle$.

Now, let $0\le k\le N-1$ and let $f\colon S^{2k+1} \to \UN$ be a smooth map.  
We pull back the trivial $\C^N$-bundle with the family of connections $\nabla^s$ via $f$ and consider the corresponding family of twisted Dirac operators $D^{f,s}$ on $S^{2k+1}$ where $s\in [0,1]$.
For $s=0$ and $s=1$ the connection $\nabla^s$ is flat so that the resulting Dirac operators $D^{f,0}$ and $D^{f,1}$ have the same spectrum as the untwisted Dirac operator on $S^{2k+1}$ with all multiplicities multiplied by $N$.
Therefore the $\xi$-invariants of $D^{f,0}$ and $D^{f,1}$ cancel.

Let $f$ represent $m\cdot g_{2k+1}\in\pi_{2k+1}(\UN)$ where $m\in\Z$.
Using that the $\Ahat$-class of the sphere is trivial, we compute
\begin{align*}\sf(D^{f,\bullet})
&=
-\int_{S^{2k+1}} \Ahat(TS^{2k+1}, g) \wedge \cs(f^*\nabla^\bullet) 
=
-\int_{S^{2k+1}} f^*\cs(\nabla^\bullet) \\
&=
-\langle [S^{2k+1}], f^*\zeta_{2k+1} \rangle 
=
-\langle f_*[S^{2k+1}], \zeta_{2k+1} \rangle 
=
-\langle m \, h_{2k+1}(g_{2k+1}), \zeta_{2k+1} \rangle
= 
-m\alpha_k .
\end{align*}
Choosing $m=1$, we see in particular that $\alpha_k \in \Z$.
\end{numberedexample}

\begin{numberedexample}\label{ex.Sn-neu}
Let $S\subset\R^{n+1}$ be a closed connected hypersurface. 
Let $\nu$ be a unit normal field along $S$.
The Weingarten map $W\colon TS \to TS$ is given by $W(X) = -\nbR_X \nu$ where $\nbR$ is the Levi-Civita connection of $\R^{n+1}$.
For example, if $S=S^n$ is the standard sphere and $\nu$ is the inward unit normal field then $W=\id$.

If $n$ is even, then the restriction of spinor bundle on $\R^{n+1}$ to $S$ can be naturally identified with the spinor bundle of $S$, i.e., $\Sigma\R^{n+1}|_S = \Sigma S$.
If $n$ is odd, then we have $\Sigma^+\R^{n+1}|_S = \Sigma S$.
In both cases, Clifford multiplication on $S$ is given by 
$$
\gamma(X) = c_0(X)c_0(\nu)\quad\text{ for }\quad X\in TS.
$$
Here $c_0$ denotes Clifford multiplication on $\R^{n+1}$.
The spinor connections are related by
$$
\nbR_X\phi = \nbS_X \phi + \tfrac12 c_0(W(X))c_0(\nu)\phi = \nbS_X \phi + \tfrac12 \gamma(W(X))\phi,
$$
see e.g. \cite{Baer3}*{Prop.~2.1}.
Hence if $\phi$ is a parallel spinor field on $\R^{n+1}$, then its restriction to $S$ satisfies
$$
\nbS_X \phi = -\tfrac12 \gamma(W(X))\phi.
$$
Such spinors are called generalized Killing spinors on $S$ w.r.t.\ the endomorphism field $-W$.
If $\phi=c_0(\nu)\tilde{\phi}$ for a parallel spinor field $\tilde{\phi}$ on $\R^{n+1}$, then we find
$$
\nbR_X \phi = \nbR_X(c_0(\nu)\tilde{\phi}) = c_0(\nbR_X \nu)\tilde{\phi} = -c_0(W(X))\tilde{\phi} = c_0(W(X))c_0(\nu)^2\tilde{\phi} = \gamma(W(X))\phi
$$
and 
$$
\nbR_X \phi = \nbS_X \phi + \tfrac12 \gamma(W(X))\phi,
$$
hence
$$
\nbS_X \phi = \tfrac12 \gamma(W(X))\phi.
$$
Thus, in this case $\phi$ is a generalized Killing spinor on $S$ w.r.t.\ the endomorphism field $+W$.
This shows that the spinor bundle of $S$ can be trivialized by generalized Killing spinors w.r.t.\ the endomorphism field $W$ and also by generalized Killing spinors w.r.t.\ the endomorphism field $-W$.

We introduce the $1$-parameter family of connections $\nabla^s = \nabla + s\gamma\circ W$ on $\Sigma S$ where $s\in[-\frac12,\frac12]$ and $\nabla$ is the connection induced by the Levi-Civita connection on $TS$.
Then $\nabla^{\pm\nicefrac12}$ are flat connections.

We choose an orthonormal frame of the trivial bundle $\Sigma S$ and denote by $\omega$ the connection $1$-form of $\nabla$ with respect to that frame.
Then $\omega^s = \omega + s\gamma\circ W$ is the connection $1$-form of $\nabla^s$ where we do not distinguish between $\gamma$ as an endomorphism-valued $1$-form and as a matrix-valued $1$-form with respect to the chosen frame.

Denote by $\Omega^s$ the curvature $2$-form of $\nabla^s$.
We then have
\begin{align}
\Omega^s
&=
d\omega^s + \omega^s \wedge \omega^s
=
d\omega + \omega \wedge \omega + s \, d(\gamma\circ W) + s \, (\omega \wedge (\gamma\circ W) + (\gamma\circ W) \wedge \omega) + s^2 \, (\gamma\circ W) \wedge (\gamma\circ W) .
\label{eq.Omegas}
\end{align}
The connections $\nabla^{\pm\nicefrac12}$ being flat gives us
\begin{align}
0 = \Omega^{\nicefrac12}
&=
d\omega + \omega \wedge \omega + \tfrac12 \, d(\gamma\circ W) + \tfrac12 \, (\omega \wedge (\gamma\circ W) + (\gamma\circ W) \wedge \omega) + \tfrac14 \, (\gamma\circ W) \wedge (\gamma\circ W) ,
\label{eq.Omega12}\\
0 = \Omega^{-\nicefrac12}
&=
d\omega + \omega \wedge \omega - \tfrac12 \, d(\gamma\circ W) - \tfrac12 \, (\omega \wedge (\gamma\circ W) + (\gamma\circ W) \wedge \omega) + \tfrac14 \, (\gamma\circ W) \wedge (\gamma\circ W) .
\label{eq.Omega-12}
\end{align}
Adding \eqref{eq.Omega12} and \eqref{eq.Omega-12} yields $d\omega + \omega \wedge \omega = -\frac14(\gamma\circ W)\wedge(\gamma\circ W)$ and subtracting them gives $d(\gamma\circ W) = -(\omega \wedge (\gamma\circ W) + (\gamma\circ W) \wedge \omega )$.
Inserting this back into \eqref{eq.Omegas} we find
\begin{align}
\Omega^s
&=
-\tfrac14 (\gamma\circ W) \wedge (\gamma\circ W)  + s^2 \, (\gamma\circ W) \wedge (\gamma\circ W) 
=
(s - \tfrac12)(s + \tfrac12) \, (\gamma\circ W) \wedge (\gamma\circ W) . 
\label{eq.OmegaFinal}
\end{align}
The same computation as in Example~\ref{ex.UN} now gives for the family $\nabla^\bullet=(\nabla^s)_{s\in[-\nicefrac12,\nicefrac12]}$:
\begin{align*}
\cs(\nabla^\bullet)
&=
\sum_{k=0}^\infty\frac{ (-1)^k k!}{(2k+1)!}\frac{\tr\big((\gamma\circ W)^{2k+1}\big)}{(2\pi i)^{k+1}}  .
\end{align*}

From now on, we assume that $S$ is diffeomorphic to $S^n$.
Then the cohomology of $S$ is trivial in all degrees except $0$ and $n$, only the term with $2k+1=n$ can give a nonexact contribution to the odd Chern character.
In particular, this happens only if the dimension $n$ of $S$ is odd.

Let $n=2m+1$ be odd, let $M$ be an $n$-dimensional closed Riemannian spin manifold and let $f\colon M\to S$ be a smooth map.
We pull back the bundle $\Sigma S$ along $f$ together with the family of connections $\nabla^\bullet$  and consider the corresponding family of twisted Dirac operators $D^{f,s}$ on $M$ where $s\in [-\nicefrac12,\nicefrac12]$.
Then 
$$
\Ahat(TM)\wedge\cs(f^*\nabla^\bullet)
=
\Ahat(TM)\wedge f^*\cs(\nabla^\bullet)
=
\frac{ (-1)^m m!}{(2m+1)!}\frac{f^*\tr\big((\gamma\circ W)^{2m+1}\big)}{(2\pi i)^{m+1}}
$$
because all nontrivial components of $\Ahat(TM)$ lead to too high degree forms.

Let $e_1,...,e_n$ be a positively oriented orthonormal tangent frame of $S$ consisting of eigenvectors of $W$.
The eigenvalues $\kappa_1,...,\kappa_n$ of $W$ are the principal curvatures of $S$.
Then the endomorphisms $(\gamma\circ W)(e_1)=\kappa_1 \gamma(e_1),...,(\gamma\circ W)(e_n)=\kappa_n \gamma(e_n)$ anticommute.
We observe
\begin{align*}
(\gamma\circ W)^{2m+1}(e_1,...,e_{2m+1})
&=
\sum_{\sigma\in S_{2m+1}} \sign(\sigma)(\gamma\circ W)(e_{\sigma(1)}) \circ \cdots \circ (\gamma\circ W)(e_{\sigma(2m+1)}) \\
&=
(2m+1)! \cdot (\gamma\circ W)(e_1) \circ \cdots \circ (\gamma\circ W)(e_{2m+1}) \\
&=
(2m+1)! \cdot \kappa_1\cdots\kappa_{2m+1} \cdot \gamma(e_1) \circ \cdots \circ \gamma  (e_{2m+1}) \\
&=
(2m+1)! \cdot \det(W)\cdot (-i)^{m+1} \cdot \id_{\Sigma S^n} .
\end{align*}
The last equality follows from the fact that the so-called complex volume element $i^{m+1} \gamma(e_1) \circ \cdots \circ \gamma(e_{2m+1})$ acts trivially on the spinor space in odd dimensions, see \cite{LM}*{Ch.~I, \S~5}.
Thus,
$$
\tr((\gamma\circ W)^{2m+1}) 
= 
(2m+1)! \cdot \det(W)\cdot (-i)^{m+1} \cdot \rk(\Sigma S^n) \cdot\vol_{S^n} 
= 
(2m+1)! \cdot \det(W)\cdot (-i)^{m+1} \cdot 2^{m}\cdot\vol_{S}
$$
where $\vol_{S}$ denotes the volume form of $S$.
Denote the volume form of the unit sphere $S^n$ by $\vol_{S^n}$ and regard $-\nu$ as a map from $S$ to $S^n$.
Then $\det(W)\vol_{S} = (-\nu)^*\vol_{S^n}$.

Since $D^{f,-\nicefrac12}$ and $D^{f,\nicefrac12}$ are both obtained by twisting the Dirac operator of $M$ with a flat trivial bundle, they are isospectral so that their $\xi$-invariants cancel.
Using the main theorem, we compute the spectral flow of the family $D^{f,\bullet}=(D^{f,s})_{s\in[-\nicefrac12,\nicefrac12]}$:
\begin{align*}
\sf(D^{f,\bullet})
&=
-\int_M \Ahat(TM) \wedge \cs(f^*\nabla^\bullet) \\
&=
-\frac{ (-1)^m m!}{(2m+1)!(2\pi i)^{m+1}} \int_M f^*\tr\big((\gamma\circ W)^{2m+1}\big) \\
&=
-\frac{ (-1)^m m!}{(2m+1)!(2\pi i)^{m+1}} (2m+1)! \cdot (-i)^{m+1} \cdot 2^{m}\cdot\int_M f^*(-\nu)^*\vol_{S^n} \\
&=
\frac{m!}{2\pi^{m+1}} \cdot\int_M f^*(-\nu)^*\vol_{S^n} \\
&=
\frac{m!}{2\pi^{m+1}} \cdot \deg(f)\cdot\deg(-\nu)\cdot\vol(S^n) \\
&=
\deg(f).
\end{align*}
\end{numberedexample}

In order to apply the results of Example~\ref{ex.Sn-neu} we need an auxiliary lemma.
For a linear map $F$ between Euclidean vector spaces we denote by $|F|_{\tr}$ its trace norm, i.e.\ the sum of its singular values, and by $|F|_\op$ its operator norm, i.e., the largest singular value.

\begin{lemma}[\cite{Baer}*{Lemma~4}]
\label{lem.smart}
Let $U$ and $V$ be $n$-dimensional Euclidean vector spaces, let $F\colon U\to V$ be linear and let $B\colon V\to V$ be symmetric and positive semidefinite.
Then
\begin{equation}
|B\circ F|_{\tr} \le \tr(B) |F|_\op.
\label{eq.normcomposition}
\end{equation}
If $B$ is positive definite and equality holds in \eqref{eq.normcomposition}, then $F$ is $|F|_\op$ times an isometry.
\end{lemma}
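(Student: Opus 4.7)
The plan is to reduce both claims to the duality formula
$$
|B\circ F|_{\tr} \;=\; \sup\bigl\{\, \tr(L\circ B\circ F) : L\colon V\to U \text{ linear, } |L|_\op\le 1\,\bigr\},
$$
which I will read off from the polar decomposition $B\circ F = W\,|B\circ F|$ (the supremum is attained at $L=W^*$). I then diagonalize the symmetric positive semidefinite operator $B$ in an orthonormal eigenbasis $v_1,\dots,v_n$ of $V$ with nonnegative eigenvalues $\lambda_1,\dots,\lambda_n$; by cyclicity of the trace the argument of the supremum becomes $\sum_i \lambda_i\,\langle FL v_i, v_i\rangle$.

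For the inequality itself, I will use the Cauchy--Schwarz chain
$$
|\langle FL v_i, v_i\rangle| \;\le\; |FL v_i| \;\le\; |F|_\op\,|Lv_i| \;\le\; |F|_\op\,|L|_\op \;\le\; |F|_\op.
$$
Combined with the nonnegative weights $\lambda_i$ this immediately yields $\tr(LBF)\le |F|_\op\,\tr(B)$, and taking the supremum over $L$ proves \eqref{eq.normcomposition}.

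For the rigidity assertion I will assume $B$ is positive definite and choose an $L$ attaining the supremum. Since each $\lambda_i>0$, equality in the weighted sum forces $\langle FL v_i, v_i\rangle = |F|_\op$ individually for every $i$, and retracing the chain above produces $FL v_i = |F|_\op v_i$ together with $|Lv_i|=1$ for all $i$. Because $|L|_\op\le 1$ makes $I_V - L^*L$ positive semidefinite, and its diagonal $\langle(I_V - L^*L)v_i,v_i\rangle = 1 - |Lv_i|^2$ vanishes in the orthonormal basis $\{v_i\}$, the operator $I_V - L^*L$ has zero trace and must therefore be zero. Thus $L\colon V\to U$ is an isometry, and by the equidimensionality assumption an isomorphism; the identity $FLv_i = |F|_\op\, v_i$ then translates into $F = |F|_\op\,L^{-1}$, exhibiting $F$ as $|F|_\op$ times the isometry $L^{-1}\colon U\to V$.

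The main obstacle is this rigidity step, where several inequalities must saturate simultaneously and the pointwise equalities $|Lv_i|=1$ have to be promoted to the global identity $L^*L = I_V$; the inequality itself is essentially immediate once the duality formula is in hand.
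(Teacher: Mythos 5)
The paper does not prove this lemma itself; it only cites it as Lemma~4 of \cite{Baer}, so there is no in-paper argument to compare your proposal against. Judged on its own, your proof is correct. The duality formula $|B\circ F|_{\tr}=\sup\{\tr(L B F):|L|_\op\le 1\}$ (with the supremum attained at $L=W^*$ from the polar decomposition) is a standard and valid characterization of the trace norm, the diagonalization of $B$ plus the Cauchy--Schwarz chain gives the inequality cleanly, and the rigidity step is sound: equality forces each $\langle FLv_i,v_i\rangle=|F|_\op$, hence $FLv_i=|F|_\op v_i$ and $|Lv_i|=1$, and the trace argument on the positive semidefinite operator $I_V-L^*L$ correctly upgrades the pointwise normalizations to $L^*L=I_V$. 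Two minor points you should make explicit: (i) the chain $|F|_\op|Lv_i|=|F|_\op$ only lets you cancel $|F|_\op$ and conclude $|Lv_i|=1$ when $|F|_\op\ne 0$, so the degenerate case $F=0$ should be dispatched separately (it is trivial, as $0$ is $0$ times any isometry); (ii) in a real Euclidean setting the quantities $\langle FLv_i,v_i\rangle$ are real, which is tacitly used when passing from equality of the sum to equality of each summand. Neither affects the substance; the argument is complete and, as far as one can tell without the cited source, uses the natural route one would expect.
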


The following theorem was first proved by Llarull \cite{Ll}*{Theorem~C} for $S=S^n$.
In even dimensions it follows from work by Goette and Semmelmann (\cite{GS}*{Theorem~2.4}) under an additional pinching assumption on the principal curvatures of $S$.
In odd dimensions, it has first been shown by Li, Su, and Wang in \cite{LSW}*{Theorem~1.2} using a spectral flow computation.

\begin{theorem}
Let $S\subset\R^{n+1}$ be a closed connected hypersurface, where $n \geq 3$.
Assume that the Weingarten map w.r.t.\ the inward unit normal is positive definite at each point of $S$.
Let $(M,g)$ be an $n$-dimensional connected closed Riemannian spin manifold.
Let $f\colon M \to S$ be a smooth map which is $\Lambda^2$-$1$-contracting and satisfies $\deg(f) \neq 0$. 
Assume that the scalar curvatures of $M$ and $S$ satisfy $\scal_M \geq \scal_S\circ f$.

Then $f$ is a Riemannian isometry.
\end{theorem}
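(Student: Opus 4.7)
Because the Weingarten map $W$ is positive definite everywhere, $S$ is a closed strictly convex hypersurface in $\R^{n+1}$, and so by a classical theorem of Hadamard, $S$ is diffeomorphic to $S^n$.
We treat the case of odd $n$, so that Example~\ref{ex.Sn-neu} applies; the even-dimensional case is analogous, using the classical Atiyah-Singer index theorem in place of the spectral flow.
The strategy is to produce a harmonic spinor via nonzero spectral flow and then use a Schrödinger-Lichnerowicz Bochner argument to conclude that $f$ must be an isometry.

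Pull back the family $\nabla^\bullet=(\nabla^s)_{s\in[-\nicefrac12,\nicefrac12]}$ from Example~\ref{ex.Sn-neu} via $f$, and consider the associated family $D^{f,\bullet}$ of twisted Dirac operators acting on sections of $\Sigma M\otimes f^*\Sigma S$.
By Example~\ref{ex.Sn-neu}, $\sf(D^{f,\bullet})=\deg(f)\neq 0$, so there exist $s_0\in[-\nicefrac12,\nicefrac12]$ and a nontrivial section $\psi$ with $D^{f,s_0}\psi=0$.
The Schrödinger-Lichnerowicz formula for $D^{f,s_0}$ then yields
$$
0=\int_M|\nabla^{f,s_0}\psi|^2+\int_M\frac{\scal_M}{4}|\psi|^2+\int_M\langle\mathcal{R}^{f,s_0}\psi,\psi\rangle,
$$
where $\mathcal{R}^{f,s_0}$ denotes the Clifford contraction of the twist curvature $f^*\Omega^{s_0}$.

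Using the explicit formula $\Omega^s=(s^2-\tfrac14)(\gamma\circ W)\wedge(\gamma\circ W)$ from Example~\ref{ex.Sn-neu}, together with Lemma~\ref{lem.smart} applied to $F=df_p$ (and an appropriate positive endomorphism built from $W$), and invoking the $\Lambda^2$-$1$-contracting hypothesis (which bounds all singular-value products $\sigma_i\sigma_j$ by $1$), one derives the pointwise Llarull-type estimate
$$
\langle\mathcal{R}^{f,s_0}\psi,\psi\rangle\geq (s_0^2-\tfrac14)\,\scal_S\circ f\cdot|\psi|^2,
$$
with the factor $\scal_S\circ f$ arising from the Gauss identity $\scal_S=2\sum_{i<j}\kappa_i\kappa_j$.
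Substituting into the Lichnerowicz identity and rearranging,
$$
0\geq\int_M\frac{\scal_M-\scal_S\circ f}{4}|\psi|^2+s_0^2\int_M\scal_S\circ f\cdot|\psi|^2.
$$
The first integral is nonnegative by the scalar curvature hypothesis and the second by strict convexity ($\scal_S>0$).
Since $\psi\not\equiv 0$, this forces $s_0=0$ and turns every inequality into an equality.

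At $s_0=0$ the spinor $\psi$ is parallel for $\nabla^{f,0}$, so $|\psi|$ is constant and $\psi$ is nowhere zero; also $\scal_M=\scal_S\circ f$ everywhere, and the pointwise curvature estimate is sharp at every point.
The rigidity statement of Lemma~\ref{lem.smart} then forces $df_p$ to be a pointwise isometry, so $f$ is a local Riemannian isometry.
Being a local isometry between closed manifolds of the same dimension, $f$ is a Riemannian covering map, and since $S\cong S^n$ is simply connected (as $n\geq 3$) and $\deg(f)\neq 0$, the map $f$ must be a diffeomorphism and hence a Riemannian isometry.
The chief technical obstacle is the Llarull-type estimate: one must carefully compute $\mathcal{R}^{f,s_0}$ in a frame adapted to an eigenbasis of $W$ at $f(p)$, apply Lemma~\ref{lem.smart} to the singular value decomposition of $df_p$, and then extract pointwise isometry from the sharpness via the equality case of Lemma~\ref{lem.smart}.
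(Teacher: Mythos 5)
Your proof follows essentially the same route as the paper's: pull back the family from Example~\ref{ex.Sn-neu}, produce a harmonic spinor from the nonzero spectral flow (or from the Atiyah--Singer index theorem when $n$ is even), plug into the Bochner--Lichnerowicz--Weitzenb\"ock identity, bound the twist curvature term via Lemma~\ref{lem.smart} and the Gauss equation, and extract rigidity from the equality case. Your refinement of keeping the $\big(\tfrac14-s^2\big)$ factor so that $s_0=0$ is forced is correct but plays no further role (the argument works for any $s_0$); two small imprecisions worth fixing are that Lemma~\ref{lem.smart} is applied to $F=\Lambda^2 df_p$ and $B=\Lambda^2 W_{f(p)}$ on the second exterior powers (not to $F=df_p$), and the passage from $\Lambda^2 df$ being an isometry to $df$ being an isometry should be spelled out: the singular-value relations $\lambda_i\lambda_j=1$ for $i\neq j$ together with $n\geq 3$ force $\lambda_1=\cdots=\lambda_n=1$.
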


Here $\Lambda^2$-$1$-contracting means that the induced map $\Lambda^2 df(x)\colon \Lambda^2 T_xM \to \Lambda^2 T_{f(x)}S$ satisfies $|\Lambda^2 df|_\op \leq 1$ for each $x\in M$.

\begin{proof}
Let $E := f^*\Sigma S$, $\nabla^{\bullet}$ and $D^{f, \bullet}$ be as in Example~\ref{ex.Sn-neu}. 
The Bochner-Lichnerowicz-Weitzenböck formula reads
\begin{equation}
(D^{f,s})^2 = (\tilde\nabla^s)^*\tilde\nabla^s + \frac{\scal_M}{4} + \mathcal{R}^{E,s}
\label{eq.BLW}
\end{equation}
where $\tilde\nabla^s = \nabla^{\Sigma M} \otimes 1 + 1 \otimes f^*\nabla^s$ and the curvature term coming from the twist bundle is given in terms of an orthonormal tangent basis $e_1,...,e_n$ by
$$
\mathcal{R}^{E,s} = \frac12 \sum_{i \neq j} c(e_i)c(e_j) \otimes f^*\Omega^s(e_i,e_j).
$$
Here $c$ denotes the Clifford multiplication on $M$. 
We may choose the orthonormal basis of $T_xM$ in such a way that $W(df(e_i)) = \mu_i b_i$, where $b_1,...,b_n$ is an  orthonormal basis for $T_{f(x)}S^n$ and $0 \leq \mu_1 \leq \dotsc \leq \mu_n$ are the singular values of $W\circ df$ at $x$. 
Using \eqref{eq.OmegaFinal} we have for $i\neq j$ that
\begin{align*}
f^*\Omega^s(e_i,e_j) &
=
\Omega^s(df(e_i), df(e_j)) \\
& =
(s-\tfrac12)(s + \tfrac12)\big [\gamma (W (df(e_i)))\gamma (W (df(e_j))) - \gamma (W (df(e_j))) \gamma (W(df(e_i)))\big] \\
& =
(s-\tfrac12)(s + \tfrac12) \big[\mu_i\mu_j \gamma(b_i)\gamma(b_j) - \mu_i\mu_j\gamma(b_j)\gamma(b_i)\big] \\
& =
2(s-\tfrac12)(s + \tfrac12) \mu_i\mu_j \gamma(b_i)\gamma(b_j) .
\end{align*}
We estimate the operator norm of the endomorphism $\mathcal{R}^{E,s}$ at each point of $M$
\begin{align*}
|\mathcal{R}^{E,s}|_\op
& \leq 
|(s-\tfrac12)(s+\tfrac12)| \sum_{i \neq j} |c(e_i)c(e_j) \otimes \mu_i\mu_j\gamma(b_i)\gamma(b_j)|_\op \\
& \leq 
\tfrac14 \sum_{i \neq j} \mu_i\mu_j |c(e_i)c(e_j)|_\op |\gamma(b_i)\gamma(b_j)|_\op 
=
\tfrac14 \sum_{i \neq j} \mu_i\mu_j .
\end{align*}
Since $\Lambda^2(W\circ df)$ has the singular values $\mu_i\mu_j$ for $i < j$ we have $|\Lambda^2(W\circ df)|_{\tr} = \tfrac12 \sum_{i \neq j} \mu_i\mu_j$ and hence
\begin{equation}
|\mathcal{R}^{E,s}|_\op
\le
\tfrac12 |\Lambda^2(W\circ df)|_{\tr}
\label{in.R}
\end{equation}
By Lemma~\ref{lem.smart} we have at $x\in M$
\begin{equation}
\big|\Lambda^2(W\circ df)|_x\big|_{\tr} 
=
\big|\Lambda^2(W)|_{f(x)}\circ \Lambda^2(df)|_x\big|_{\tr} 
\le
\tr(\Lambda^2(W))|_{f(x)}\cdot \big|\Lambda^2(df)|_x\big|_{\op}
\le
\tr(\Lambda^2(W))|_{f(x)}
\label{in.Lambda2}
\end{equation}
because $f$ is $\Lambda^2$-$1$-contracting. 
Combining \eqref{in.R} and \eqref{in.Lambda2} we obtain
\begin{equation}
|\mathcal{R}^{E,s}|_\op
\le
\tfrac12 \tr(\Lambda^2(W)) \circ f
=
\tfrac14 \scal_S \circ f
\end{equation}
where the last equality follows from the Gauss equation.
Together with the assumption on the scalar curvatures this shows that $\mathcal{R}^{E,s}+\frac{\scal_M}{4}$ is a nonnegative endomorphism for each $s \in [-\tfrac12,\tfrac12]$.

If the dimension $n$ is odd, then $\sf(D^{f,\bullet})=\deg(f) \neq 0$ shows that  $D^{f,s}$ has nontrivial kernel for some $s \in [-\tfrac12,\tfrac12]$. 
If $n$ is even, a standard index computation shows that $D^{f,0}$ has nontrivial kernel.
In either case, let $\phi \in \ker(D^{f,s}) \setminus \{0\}$. 
Then \eqref{eq.BLW} yields
\begin{align}
0 
=
\int_M |D^{f,s} \phi|^2
=
\int_M \Big(|\tilde\nabla^s \phi|^2 + \frac{\scal_M}4 |\phi|^2 + \langle \mathcal{R}^{E,s} \phi, \phi\rangle\Big) 
\geq 
\int_M |\tilde\nabla^s \phi|^2 \geq 0.
\label{eq.BLWintegrated}
\end{align}
We infer that $\tilde\nabla^s \phi = 0$ and hence $|\phi|$ is constant (and nonzero). 
Furthermore, all above inequalities need to be equalities.
In particular, we have equality in \eqref{in.Lambda2}.
Lemma~\ref{lem.smart} implies that $\Lambda^2 df$ is a multiple of a linear isometry at each point.

Since we have equality in $|\Lambda^2(df)|_{\op} \leq 1$ it follows that $\Lambda^2 df$ is actually a linear isometry.
Thus, at each point $x \in M$ the singular values $\lambda_i$ of $df(x)$ satisfy $\lambda_i \lambda_j = 1$ for $i \neq j$.
Since $n \geq 3$ this implies $\lambda_1 = \cdots = \lambda_n = 1$.
Therefore, $df(x)$ is itself a linear isometry.
In other words, $f$ is a local isometry.
Surjectivity yields that $f$ is a Riemannian covering and $1$-connectedness of $S$ that $f$ is a Riemannian isometry.
\end{proof}

\begin{remark}
For $n=2$, the above theorem does not hold, not even if $S=S^n$ is the standard sphere.
However, if we replace the condition that $f$ be $\Lambda^2$-$1$-contracting by the stronger condition that $f$ is $1$-Lipschitz, i.e., $|df|_\op\le1$, then the above proof also works if $n=2$.
\end{remark}


\begin{bibdiv}
\begin{biblist}

\bib{AS}{article}{
   author={Atiyah, Michael F.},
   author={Singer, Isadore M.},
   title={The index of elliptic operators on compact manifolds},
   journal={Bull. Amer. Math. Soc.},
   volume={69},
   date={1963},
   number={3},
   pages={422--433},
   issn={0002-9904},
   zbl={0118.31203},
   mr={0157392},
   doi={10.1090/S0002-9904-1963-10957-X},
}

\bib{APS1}{article}{
   author={Atiyah, Michael F.},
   author={Patodi, Vijay K.},
   author={Singer, Isadore M.},
   title={Spectral asymmetry and Riemannian geometry. I},
   journal={Math. Proc. Cambridge Philos. Soc.},
   volume={77},
   date={1975},
   pages={43--69},
   issn={0305-0041},
   zbl={0297.58008},
   mr={0397797},
   doi={10.1017/S0305004100049410},
}

\bib{APS2}{article}{
   author={Atiyah, Michael F.},
   author={Patodi, Vijay K.},
   author={Singer, Isadore M.},
   title={Spectral asymmetry and Riemannian geometry. II},
   journal={Math. Proc. Cambridge Philos. Soc.},
   volume={78},
   date={1975},
   pages={405--432},
   issn={0305-0041},
   zbl={0314.58016},
   mr={0397798},
   doi={10.1017/S0305004100049410},
}

\bib{APS3}{article}{
   author={Atiyah, Michael F.},
   author={Patodi, Vijay K.},
   author={Singer, Isadore M.},
   title={Spectral asymmetry and Riemannian geometry. III},
   journal={Math. Proc. Cambridge Philos. Soc.},
   volume={79},
   date={1976},
   pages={71--99},
   issn={0305-0041},
   zbl={0325.58015},
   mr={0397799},
   doi={10.1017/S0305004100049410},
}


\bib{Baer3}{article}{
 author={Bär, Christian},
 issn={1016-443X},
 issn={1420-8970},
 doi={10.1007/BF02246994},
 zbl={0867.53037},
 mr={1421872},
 title={Metrics with harmonic spinors},
 journal={Geom. Func. Anal.},
 volume={6},
 number={6},
 pages={899--942},
 date={1996},
 publisher={Springer (Birkh{\"a}user), Basel},
 eprint={https://eudml.org/doc/58251},
}

\bib{Baer}{arxiv}{
 author={Bär, Christian},
 arx={2407.21704},
 title={Dirac eigenvalues and the hyperspherical radius},
 year={2024},
 note={to appear in J. Europ. Math. Soc.},
}

\bib{BB}{article}{
 author={Bär, Christian},
 author={Ballmann, Werner},
 isbn={978-1-57146-237-4},
 zbl={1331.58022},
 mr={3076058},
 doi={10.4310/SDG.2012.v17.n1.a1},
 title={Boundary value problems for elliptic differential operators of first order},
 journal={Surv. Differ. Geom.},
 volume={17},
 pages={1--78},
 date={2012},
}

\bib{Bott}{article}{
 author={Bott, Raoul},
 issn={0003-486X},
 issn={1939-8980},
 doi={10.2307/1970106},
 zbl={0129.15601},
 mr={0110104},
 title={The stable homotopy of the classical groups},
 journal={Ann. Math. (2)},
 volume={70},
 pages={313--337},
 date={1959},
 publisher={Princeton University, Mathematics Department, Princeton, NJ},
}

\bib{CZ}{article}{
 author={Cecchini, Simone},
 author={Zeidler, Rudolf},
 issn={1465-3060},
 issn={1364-0380},
 doi={10.2140/gt.2024.28.1167},
 zbl={1546.53040},
 mr={4746412},
 title={Scalar and mean curvature comparison via the Dirac operator},
 journal={Geom. \& Topol.},
 volume={28},
 number={3},
 pages={1167--1212},
 date={2024},
 publisher={Mathematical Sciences Publishers (MSP), Berkeley, CA; Geometry \& Topology Publications c/o University of Warwick, Mathematics Institute, Coventry},
}

\bib{CS}{article}{
 author={Chern, Shiing-Shen},
 author={Simons, James},
 issn={0003-486X},
 issn={1939-8980},
 doi={10.2307/1971013},
 zbl={0283.53036},
 mr={0353327},
 title={Characteristic forms and geometric invariants},
 journal={Ann. Math. (2)},
 volume={99},
 pages={48--69},
 date={1974},
 publisher={Princeton University, Mathematics Department, Princeton, NJ},
 eprint={semanticscholar.org/paper/2c58ec96da220d20b0891fdff59157f7cf3e3585},
}

\bib{SpectralFlowBook}{book}{
   author={Doll, Nora},
   author={Schulz-Baldes, Hermann},
   author={Waterstraat, Nils},
   title={Spectral flow. A functional analytic and index-theoretic approach},
   series={De Gruyter Studies in Mathematics},
   volume={94},
   publisher={De Gruyter, Berlin},
   date={2023},
   pages={viii+234},
   isbn={978-3-11-111626-1},
   isbn={978-3-11-111725-1},
   zbl={1545.58001},
   mr={4663058},
   doi={10.1515/9783111117270},
}

\bib{Ronge}{article}{
 author={van den Dungen, Koen},
 author={Ronge, Lennart},
 issn={1846-3886},
 issn={1848-9974},
 doi={10.7153/oam-2021-15-87},
 zbl={1512.47022},
 mr={4364605},
 title={The APS-index and the spectral flow},
 journal={Operators and Matrices},
 volume={15},
 number={4},
 pages={1393--1416},
 date={2021},
 publisher={ELEMENT, Zagreb},
}

\bib{Ebert}{article}{
 author={Ebert, Johannes},
 issn={0002-9947},
 issn={1088-6850},
 doi={10.1090/tran/7133},
 zbl={1373.19005},
 mr={3683115},
 title={The two definitions of the index difference},
 journal={Transactions of the American Mathematical Society},
 volume={369},
 number={10},
 pages={7469--7507},
 date={2017},
 publisher={American Mathematical Society (AMS), Providence, RI},
}

\bib{G}{article}{
   author={Getzler, Ezra},
   title={The odd Chern character in cyclic homology and spectral flow},
   journal={Topology},
   volume={32},
   date={1993},
   number={3},
   pages={489--507},
   issn={0040-9383},
   zbl={0801.46088},
   mr={1231957},
   doi={10.1016/0040-9383(93)90002-D},
}

\bib{Gilkey}{book}{
   author={Gilkey, Peter B.},
   isbn={0-8493-7874-4},
   isbn={0-914098-20-9},
   publisher={Boca Raton, FL: CRC Press},
   zbl={0856.58001},
   mr={1396308},
   title={Invariance theory, the heat equation and the Atiyah-Singer index theorem},
   pages={ix + 516},
   date={1995},
   doi={10.1201/9780203749791},
}

\bib{GS}{article}{
   author={Goette, Sebastian},
   author={Semmelmann, Uwe},
   title={Scalar curvature estimates for compact symmetric spaces},
   journal={Differential Geom. Appl.},
   volume={16},
   date={2002},
   number={1},
   pages={65--78},
   issn={0926-2245},
   zbl={1043.53030},
   mr={1877585},
   doi={10.1016/S0926-2245(01)00068-7},
}

\bib{LM}{book}{
 author={Lawson, H. Blaine},
 author={Michelsohn, Marie-Louise},
 isbn={0-691-08542-0},
 zbl={0688.57001},
 title={Spin geometry},
 mr={1031992},
 pages={xii + 427},
 date={1989},
 publisher={Princeton University Press, Princeton, NJ},
 url={https://www.jstor.org/stable/j.ctt1bpmb28},
}

\bib{LSW}{article}{
   author={Li, Yihan},
   author={Su, Guangxiang},
   author={Wang, Xiangsheng},
   title={Spectral flow, Llarull's rigidity theorem in odd dimensions and its generalization},
   journal={Sci. China Math.},
   volume={67},
   date={2024},
   number={5},
   pages={1103--1114},
   issn={1674-7283},
   mr={4739559},
   doi={10.1007/s11425-023-2138-5},
   zbl={1545.53039},
}

\bib{Ll}{article}{
   author={Llarull, Marcelo},
   title={Sharp estimates and the Dirac operator},
   journal={Math. Ann.},
   volume={310},
   date={1998},
   number={1},
   pages={55--71},
   issn={0025-5831},
   zbl={0895.53037},
   mr={1600027},
   doi={10.1007/s002080050136},
}

\bib{MiMo}{article}{
 author={Milnor, John W.},
 author={Moore, John C.},
 issn={0003-486X},
 issn={1939-8980},
 doi={10.2307/1970615},
 zbl={0163.28202},
 mr={0174052},
 title={On the structure of Hopf algebras},
 journal={Ann. Math. (2)},
 volume={81},
 pages={211--264},
 date={1965},
 publisher={Princeton University, Mathematics Department, Princeton, NJ},
 eprint={polipapers.upv.es/index.php/AGT/article/view/2250},
}

\bib{Phillips}{article}{
   author={Phillips, John},
   title={Self-adjoint Fredholm operators and spectral flow},
   journal={Canad. Math. Bull.},
   volume={39},
   date={1996},
   number={4},
   pages={460--467},
   issn={0008-4395},
   zbl={0878.19001},
   mr={1426691},
   doi={10.4153/CMB-1996-054-4},
}

\bib{Shi1}{article}{
 author={Shi, Pengshuai},
 issn={0001-8708},
 issn={1090-2082},
 doi={10.1016/j.aim.2025.110429},
 zbl={08091843},
 mr={4929482},
 title={Spectral flow of Callias operators, odd {{\(\mathrm{K}\)}}-cowaist, and positive scalar curvature},
 journal={Adv. Math.},
 volume={479},
 pages={41 pages},
 note={Id/No 110429},
 date={2025},
 publisher={Elsevier (Academic Press), San Diego, CA},
}

\bib{Shi2}{article}{
 author={Shi, Pengshuai},
 issn={1073-7928},
 issn={1687-0247},
 doi={10.1093/imrn/rnaf262},
 zbl={08087035},
 mr={4951381},
 title={The odd-dimensional long neck problem via spectral flow},
 journal={Intern. Math. Res. Notices},
 volume={2025},
 number={17},
 pages={19 pages},
 note={Id/No rnaf262},
 date={2025},
 publisher={Oxford University Press, Cary, NC},
}

\bib{YWX}{article}{
 author={Yu, Yue},
 author={Wu, Yong-Shi},
 author={Xie, Xincheng},
 issn={0550-3213},
 issn={1873-1562},
 doi={10.1016/j.nuclphysb.2017.01.018},
 zbl={1356.82039},
 mr={3611419},
 title={Bulk-edge correspondence, spectral flow and Atiyah-Patodi-Singer theorem for the {{\(\mathcal{Z}_2\)}}-invariant in topological insulators},
 journal={Nuclear Physics. B},
 volume={916},
 pages={550--566},
 date={2017},
 publisher={Elsevier (North-Holland), Amsterdam},
}

\end{biblist}
\end{bibdiv}

\end{document}